\newdimen\symskip
\newdimen\defskip
\newdimen\parind
\newdimen\leftmarge
\newdimen\theoremshape
\newcommand*{\clei}{\nobreak\hskip\z@skip}
\renewcommand{\:}{\textup{:}}
\renewcommand{\~}{\textup{;}}
\DeclareRobustCommand*{\dh}{\clei\hbox{-}\clei}
\newcommand{\no}{}
\renewcommand{\@listI}{\settowidth\labelwidth{\labheadi{\no}}\listipar{\parind}{\labelwidth}}
\newcommand{\listivpar}{\topsep\defskip\partopsep0pt\parsep-\parskip\itemsep0.5\topsep}
\newcommand{\listipar}[2]{\rightmargin0pt\leftmargin#1\labelsep#1\advance\labelsep-#2\itemindent0pt\listivpar}
\renewcommand{\@listii}{\settowidth\labelwidth{\labheadii{\@roman{\no}}}\listiipar{\parind}{\labelwidth}}
\newcommand{\listiivpar}{\topsep0.5\defskip\partopsep0pt\parsep-\parskip\itemsep0.5\topsep}
\newcommand{\listiipar}[2]{\rightmargin0pt\leftmargin#1\labelsep#1\advance\labelsep-#2\itemindent0pt\listiivpar}
\def\thempfn{\ifcase\value{footnote}1\or *\or **\or ***\else\@ctrerr\fi}
\renewcommand\footnoterule{%
  \kern-3\p@
  \hrule\@width1in
  \kern2.6\p@}
\renewcommand{\@biblabel}[1]{[#1]}
\renewenvironment{thebibliography}[1]
     {\renewcommand{\refname}{References}%
      \section*{\refname}%
      \@mkboth{\MakeUppercase\refname}{\MakeUppercase\refname}%
      \list{\@biblabel{\@arabic\c@enumiv}}%
           {\itemsep\baselineskip
            \leftmargin\parind
            \settowidth\labelwidth{\@biblabel{#1}}%
            \labelsep\parind\advance\labelsep-\labelwidth
            \@openbib@code
            \usecounter{enumiv}%
            \let\p@enumiv\@empty
            \renewcommand\theenumiv{\@arabic\c@enumiv}}%
      \sloppy
      \clubpenalty4000
      \@clubpenalty\clubpenalty
      \widowpenalty4000%
      \sfcode`\.\@m}
     {\def\@noitemerr
       {\@latex@warning{Empty `thebibliography' environment}}%
      \endlist}
\def\@maketitle{%
  \newpage
  \vskip0.5em%
  UDK \udk%
  \vskip0.5em%
  MSC \msc%
  \vskip1.5em%
  \begin{center}\bf%
  \let\footnote\thanks%
   {\Large\@author\par}%
   \vskip1em%
   {\LARGE\@title\par}%
   \vskip1em%
   {\large\@date}%
  \end{center}%
  \par
  \vskip1.5em}
\def\@title{\@latex@warning@no@line{No \noexpand\title given}}
\renewcommand{\thanksmarkseries}[1]{%
  \def\@bsmarkseries{\renewcommand{\thefootnote}{\thempfn}}}
\renewcommand\sectionmark[1]{%
 \markright{%
  \ifnum \c@secnumdepth >\z@
   \thesection. \ %
  \fi
 #1}}%
\renewcommand{\section}{\@startsection{section}{1}{0pt}%
{5.5ex plus .5ex minus .2ex}{1.5ex plus .3ex}%
{\center\normalfont\Large\bfseries\sffamily\bom}}
\renewcommand{\subsection}{\@startsection{subsection}{2}{0pt}%
{4.5ex plus .4ex minus .2ex}{0.75ex plus .2ex}%
{\center\normalfont\large\bfseries\sffamily\bom}}
\renewcommand{\subsubsection}{\@startsection{subsubsection}{3}{0pt}%
{2.5ex plus .5ex minus .2ex}{1ex plus .2ex}%
{\center\normalfont\bfseries\sffamily\bom}}
\def\@postskip@{\hskip.5em\relax}
\def\postsection{.\@postskip@}
\def\postsubsection{.\@postskip@}
\def\postsubsubsection{.\@postskip@}
\def\postparagraph{.\@postskip@}
\def\postsubparagraph{.\@postskip@}
\def\@seccntformat#1{\csname pre#1\endcsname\csname the#1\endcsname\csname post#1\endcsname}
\renewcommand{\thesection}{\textup{\arabic{section}}}
\newcommand{\parr}{\par\addvspace{\defskip}}
\newcommand{\theo}[2]{\newtheorem{#1}{#2}}
\newcommand{\deff}[2]{\newenvironment{#1}{\parr\textbf{#2.}}{\parr}}
\def\@begintheorem#1#2[#3]{%
  \deferred@thm@head{\the\thm@headfont \thm@indent
    \@ifempty{#1}{\let\thmname\@gobble}{\let\thmname\@iden}%
    \@ifempty{#2}{\let\thmnumber\@gobble}{\let\thmnumber\@iden}%
    \@ifempty{#3}{\let\thmnote\@gobble}{\let\thmnote\@iden}%
    \thm@notefont{\bfseries\upshape}%
    \indent%
    \thm@swap\swappedhead\thmhead{#1}{#2}{#3}%
    \the\thm@headpunct
    \thmheadnl 
    \hskip\thm@headsep
  }%
  \ignorespaces}
\renewenvironment{proof}{\setcounter{cas}{0}\parr\pushQED{\qed}\normalfont$\square\quad$}{\setcounter{cas}{0}\popQED\@endpefalse\parr}
\newcommand{\labheadi}[1]{\textup{#1)}}
\newcommand{\labheadii}[1]{\textup{(#1)}}
\newcommand{\labhi}[1]{\labheadi{\arabic{#1}}}
\newenvironment{nums}[1]{\renewcommand{\no}{#1}\begin{enumerate}}{\end{enumerate}}
\newcommand{\equ}[1]{\begin{equation*}#1\end{equation*}}
\def\LT@makecaption#1#2#3{%
  \LT@mcol\LT@cols c{\hbox to\z@{\hss\parbox[t]\LTcapwidth{%
    \sbox\@tempboxa{#1{#2. }#3}%
    \ifdim\wd\@tempboxa>\hsize
      #1{#2. }#3%
    \else
      \hbox to\hsize{\hfil\box\@tempboxa\hfil}%
    \fi
    \endgraf\vskip\baselineskip}%
  \hss}}}
\newenvironment{casks}{%
  \matrix@check\casks\env@casks
}{%
  \endarray\right.%
}
\def\env@casks{%
  \let\@ifnextchar\new@ifnextchar
  \left\lbrack
  \def\arraystretch{1.2}%
  \array{@{}l@{\quad}l@{}}%
}
\newcounter{numt}
\newcounter{col}
\newcounter{coll}
\renewcommand{\le}{\leqslant}
\newcommand{\fa}{\,\forall\,}
\newcommand{\es}{\varnothing}
\newcommand{\subs}{\subset}
\newcommand{\cln}{\colon}
\newcommand{\nl}{\lhd}
\newcommand{\Ra}{\Rightarrow}
\newcommand{\Lra}{\Leftrightarrow}
\newcommand{\thra}{\twoheadrightarrow}
\newcommand{\ol}{\overline}
\newcommand*{\bw}[1]{#1\nobreak\discretionary{}{\hbox{$\mathsurround=0pt #1$}}{}}
\newcommand{\sco}{,\ldots,}
\newcommand{\ssub}{\bw\subs\ldots\bw\subs}
\newcommand{\br}[1]{\bigl(#1\bigr)}
\newcommand{\Br}[1]{\Bigl(#1\Bigr)}
\newcommand{\bs}[1]{\bigl[#1\bigr]}
\newcommand{\bc}[1]{\bigl\{#1\bigr\}}
\newcommand{\BC}[1]{\Bigl\{#1\Bigr\}}
\newcommand{\mfr}{\mathfrak}
\newcommand{\ggt}{\mfr{g}}
\newcommand{\hgt}{\mfr{h}}
\newcommand{\agt}{\mfr{a}}
\newcommand{\zgt}{\mfr{z}}
\newcommand{\la}{\lambda}
\newcommand{\rh}{\rho}
\DeclareMathOperator{\Ker}{Ker}
\DeclareMathOperator{\Der}{Der}
\DeclareMathOperator{\ad}{ad}
\DeclareMathOperator{\Img}{Im}
\DeclareMathOperator{\tr}{tr}
\DeclareMathOperator{\cha}{char}
\DeclareMathOperator{\rad}{rad}
\DeclareMathOperator{\Spec}{Spec}
\newcommand{\glg}{\mfr{gl}}
\newcommand{\slg}{\mfr{sl}}
\newcommand{\bom}{\boldmath}
\begin{document}

\author{Styrt O.\,G.\thanks{Russia, MIPT, oleg\_styrt@mail.ru}}
\title{Elements of a~Lie algebra\\
acting nilpotently\\
in all its representations}
\date{}
\newcommand{\udk}{512.815.1+512.815.6}
\newcommand{\msc}{17B10+17B20+17B30+17B40}

\maketitle

An equivalent condition for an element of a~Lie algebra acting nilpotently in all its representations is obtained. Namely, it should belong to the
derived algebra and go via factoring over the radical to a~nilpotent element of the corresponding (semisimple) quotient algebra.

\smallskip

\textit{Key words}\:
Lie algebra, radical of a~Lie algebra, semisimple Lie algebra, nilpotent element of a~semisimple Lie algebra, nilpotent operator.

\section{Introduction}\label{introd}

Let $\ggt$ be a~Lie algebra over an algebraically closed field of characteristic zero. In the case of a~semisimple Lie algebra~$\ggt$
\begin{itemize}
\item its element~$a$ is called \textit{nilpotent} if the adjoint operator $\ad a$ is nilpotent\~
\item denote the set of all its nilpotent elements by~$N_{\ggt}$.
\end{itemize}
In the general case, denote by~$\hgt$ the semisimple Lie algebra $\ggt/(\rad\ggt)$ and by~$\pi$ the factoring homomorphism $\ggt\thra\hgt$. Here
is the main result of the paper.

\begin{theorem}\label{main} For any element $a\in\ggt$, the following conditions are equivalent\:
\begin{nums}{-1}
\item\label{nilp} in each representation of~$\ggt$, the element~$a$ acts nilpotently\~
\item\label{cond} $a\in[\ggt,\ggt]$ and $\pi(a)\in N_{\hgt}$.
\end{nums}
\end{theorem}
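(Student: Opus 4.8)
The plan is to prove the two implications separately, the direction \ref{cond}$\Ra$\ref{nilp} being the substantial one. For \ref{nilp}$\Ra$\ref{cond} I would just exhibit suitable representations. If $a\notin[\ggt,\ggt]$, then $a$ has nonzero image in the abelian quotient $\ggt/[\ggt,\ggt]$, so some linear functional on~$\ggt$ killing $[\ggt,\ggt]$ is nonzero at~$a$; this is a one-dimensional representation in which $a$ acts by a nonzero scalar, contradicting \ref{nilp}. Hence $a\in[\ggt,\ggt]$. Composing $\pi$ with the adjoint representation of~$\hgt$ gives a representation $\ad_{\hgt}\circ\pi$ of~$\ggt$ in which $a$ acts by $\ad_{\hgt}(\pi(a))$; by \ref{nilp} this operator is nilpotent, which is precisely $\pi(a)\in N_{\hgt}$.

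For \ref{cond}$\Ra$\ref{nilp} I would fix a Levi decomposition $\ggt=\mfr{s}\ltimes\rad\ggt$ with $\mfr{s}$ semisimple and write $a=s+r$, where $s\in\mfr{s}$ and $r\in\rad\ggt$. Since $\pi$ restricts to an isomorphism $\mfr{s}\xra{\sim}\hgt$ and $\pi(a)=\pi(s)$, the hypothesis $\pi(a)\in N_{\hgt}$ says that $s$ is nilpotent in~$\mfr{s}$; as $\mfr{s}$ is semisimple, preservation of the Jordan decomposition makes $\rho(s)$ nilpotent in \emph{every} representation $\rho\cln\ggt\to\glg(V)$. Moreover $\mfr{s}=[\mfr{s},\mfr{s}]\subs[\ggt,\ggt]$, so $r=a-s\in[\ggt,\ggt]\cap\rad\ggt=[\ggt,\rad\ggt]$ (the last equality being the standard identity $[\ggt,\ggt]=\mfr{s}\oplus[\ggt,\rad\ggt]$ coming from the Levi decomposition).

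Now the crux. Given a representation $\rho\cln\ggt\to\glg(V)$, I would apply Lie's theorem to the solvable subalgebra $\mfr{b}:=\F s+\rad\ggt$ (solvable as an extension of the one-dimensional $\mfr{b}/\rad\ggt$ by the solvable ideal $\rad\ggt$), getting a basis of~$V$ in which every operator of $\rho(\mfr{b})$, in particular $\rho(a)$ and $\rho(s)$, is upper triangular. The diagonal entries are then the values $\mu(\cdot)$ of the weights $\mu\in\mfr{b}^{*}$ occurring in~$V$, so the eigenvalues of $\rho(a)$ are exactly the numbers $\mu(a)=\mu(s)+\mu(r)$. Both summands vanish: $\rho(s)$ is nilpotent and upper triangular, hence has zero diagonal, i.e.\ $\mu(s)=0$; and since $r\in[\ggt,\rad\ggt]$, the invariance lemma from the proof of Lie's theorem (a weight of an ideal vanishes on the bracket of that ideal with the whole algebra) gives $\mu(r)=\br{\mu|_{\rad\ggt}}(r)=0$. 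Thus all $\mu(a)=0$ and $\rho(a)$ is nilpotent.

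The step I expect to be the main obstacle is exactly this last one: nilpotency of $\rho(a)$ cannot be deduced from $\rho(s)$ and $\rho(r)$ being nilpotent separately, since these operators need not commute, so the eigenvalues of $\rho(a)$ must be controlled directly. Triangularizing the \emph{single} solvable algebra $\mfr{b}$ that contains both $s$ and $r$, and invoking weight-invariance for the ideal $\rad\ggt$, is what forces the two contributions to vanish on one and the same triangular basis.
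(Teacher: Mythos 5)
Your proof of $\text{\ref{nilp}}\Ra\text{\ref{cond}}$ coincides with the paper's (one\dh dimensional representations detect $[\ggt,\ggt]$, and $\ad_{\hgt}\circ\,\pi$ detects $N_{\hgt}$). Your $\text{\ref{cond}}\Ra\text{\ref{nilp}}$ is a genuinely different route: the paper never fixes a Levi subalgebra, but instead characterizes nilpotency in a semisimple algebra by the self\dh commutator condition $e\in[e,\hgt]$ (Theorem~\ref{coni}), deduces $a\in[a,\ggt]+\br{(\rad\ggt)\cap[\ggt,\ggt]}$, kills the second summand in every irreducible representation (Corollary~\ref{caze}), and gets nilpotency from $\rh(a)\in\bs{\rh(a),\glg(V)}$. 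Your decomposition $a=s+r$ with Jordan preservation for the semisimple part, together with the correct identity $[\ggt,\ggt]\cap\rad\ggt=[\ggt,\rad\ggt]$, is a legitimate alternative skeleton.

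There is, however, a genuine gap at the crux. The invariance lemma (the second part of Theorem~\ref{lid}) yields $\xi\br{[\ggt,\agt]}=0$ only for functionals $\xi$ with $V_{\xi}(\agt)\ne0$, i.\,e. for weights of the ideal $\agt$ realized in the $\ggt$-module $V$ itself. The functionals $\mu$ you use are the diagonal characters of a Lie triangularization of $\mfr{b}=ks+\rad\ggt$; they are weights of $\mfr{b}$ on $\mfr{b}$-subquotients of $V$, and for a solvable non\dh nilpotent algebra such diagonal characters need not have nonzero weight spaces in $V$ (already a two\dh dimensional module over the algebra $\ha{x,y\cln [x,y]=y}$ shows this). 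If you instead apply the lemma with ambient algebra $\mfr{b}$, it only gives $\mu\br{[\mfr{b},\rad\ggt]}=0$, and $[\mfr{b},\rad\ggt]$ can be a proper subspace of $[\ggt,\rad\ggt]$ (e.\,g. for $\ggt=\slg_2\ltimes k^2$ and $s$ a nilpotent element of $\slg_2$, where $[\mfr{b},\rad\ggt]$ is one\dh dimensional while $[\ggt,\rad\ggt]=k^2$), so it does not reach your $r$. The repair is to exploit the $\ggt$-module structure before triangularizing: pass to a $\ggt$-composition series of $V$; on each irreducible factor $\rad\ggt$ acts by scalars through a genuine weight (the first part of Theorem~\ref{lid} together with Corollary~\ref{inv}), to which the invariance lemma does apply, so $r\in[\ggt,\rad\ggt]$ acts by zero there, while $\rh(s)$ is nilpotent on each factor by Jordan preservation; hence $a=s+r$ acts nilpotently on each factor and therefore on $V$. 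With this fix your argument closes, but the triangularization of $\mfr{b}$ becomes superfluous and the overall structure reverts to that of the paper's proof.
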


\section{Notations and auxiliary facts}\label{facts}

In this section, a~number of auxiliary notations and statements is given.

First of all, for brevity, the following notations will be used\:
\begin{itemize}
\item $k$ is the main field\~
\item $E_V$ is the identity operator in a~space~$V$ (if the space is clear from the context, then the index can be omitted)\~
\item $\Spec X$ is the set of all eigenvalues of an operator~$X$\~
\item $V_{\la}(X)$ (resp. $V^{\la}(X)$) is the eigenspace (resp. the root subspace) of an operator~$X$ in a~space~$V$ with an eigenvalue~$\la$\~
\item $\zgt(a)$ is the centralizer of an element~$a$ of a~Lie algebra.
\end{itemize}

Let $A$ be an arbitrary algebra (without any requirements to the bilinear operation of multiplication). It is known (and can be proved by elementary
check) that
\begin{itemize}
\item $\Der A$ is a~Lie subalgebra in $\glg(A)$\~
\item for all $d\in\Der A$ and $\la,\mu\in k$, we have $\br{A_{\la}(d)}\cdot\br{A_{\mu}(d)}\subs A_{\la+\mu}(d)$.
\end{itemize}

\begin{lemma}\label{lrn} If $n:=\dim A$, $k=\ol{k}$ and $\cha k\notin(0;n]$, then, for each $d\in\Der A$, $\la\in k^*$ and $a\in A^{\la}(d)$,
the operators of left and right shifts by~$a$ are nilpotent.
\end{lemma}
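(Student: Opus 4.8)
The plan is to use the derivation identity to control how multiplying by $a$ shifts the root-space grading of $d$, and then to run a counting argument in which the restriction on $\cha k$ makes this shift strictly dissipative.

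First I would record a twisted Leibniz rule. Writing $E$ for the identity operator and $D_\nu:=d-\nu E$ for $\nu\in k$, the derivation property $d(x\cdot y)=(dx)\cdot y+x\cdot(dy)$ gives at once
\[
D_{\la+\mu}(x\cdot y)=(D_\la x)\cdot y+x\cdot(D_\mu y)\qquad(x,y\in A,\ \la,\mu\in k),
\]
and an induction on $m$ identical to the usual binomial-Leibniz computation upgrades this to
\[
D_{\la+\mu}^m(x\cdot y)=\sum_{i=0}^{m}\binom{m}{i}(D_\la^i x)\cdot(D_\mu^{m-i}y).
\]
If now $a\in A^\la(d)$ and $x\in A^\mu(d)$, pick $p,q$ with $D_\la^p a=0$ and $D_\mu^q x=0$; then every summand of $D_{\la+\mu}^m(a\cdot x)$ dies once $m\ge p+q-1$, so $a\cdot x\in A^{\la+\mu}(d)$. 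Thus (this is the root-space analogue of the eigenspace inclusion stated above) both shifts $L_a\colon x\mapsto a\cdot x$ and $R_a\colon x\mapsto x\cdot a$ carry $A^\mu(d)$ into $A^{\la+\mu}(d)$ for every $\mu$.

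Next I would pass to the full decomposition. Since $k=\ol{k}$ and $\dim A=n$, we have $A=\bigoplus_{\mu\in S}A^\mu(d)$ with $S:=\Spec d$ finite, $|S|=r\le n$, and the previous step gives $L_a^m\br{A^\mu(d)}\subs A^{\mu+m\la}(d)$ for all $\mu\in S$ and $m\ge0$. Here the hypothesis $\cha k\notin(0;n]$ enters: it makes $1,2,\ldots,r$ nonzero in $k$, so together with $\la\ne0$ the $r+1$ scalars $\mu,\mu+\la,\ldots,\mu+r\la$ are pairwise distinct. As $|S|=r$, one of them, say $\mu+m_\mu\la$ with $m_\mu\le r$, falls outside $S$, whence $A^{\mu+m_\mu\la}(d)=0$ and $L_a^{m_\mu}$ kills $A^\mu(d)$. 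Since higher powers of $L_a$ then also kill $A^\mu(d)$, we get $L_a^{r}\br{A^\mu(d)}=0$ for every $\mu\in S$, i.e.\ $L_a^{r}=0$; the argument for $R_a$ is identical.

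The only genuinely delicate step is this last one. In positive characteristic the shift $\mu\mapsto\mu+\la$ is periodic, so it is not automatic that iterating it expels every eigenvalue from the finite set $S$; the bound $\cha k\notin(0;n]$ is exactly what forces the first $r+1\le n+1$ shifts of each $\mu$ to be distinct, which suffices to leave $S$ within $r$ steps. The twisted Leibniz rule and the existence of the root-space decomposition are routine by comparison.
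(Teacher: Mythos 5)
Your proof is correct and follows essentially the same route as the paper: the shift operators move $A^{\mu}(d)$ into $A^{\mu+\la}(d)$, and the hypothesis $\cha k\notin(0;n]$ guarantees that the $\le n$ eigenvalues of $d$ cannot absorb $n+1$ pairwise distinct translates $\mu,\mu+\la,\dots$, so some power of the shift annihilates every root subspace. The only difference is cosmetic: you prove the root-space inclusion $A^{\la}(d)\cdot A^{\mu}(d)\subs A^{\la+\mu}(d)$ via the twisted Leibniz rule, which the paper treats as a known elementary fact, and you use the slightly sharper exponent $r=|\Spec d|$ in place of $n$.
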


\begin{proof} By condition, $|\Spec d|\le n$. Further, let $X$ be any of the operators $l_a$ and~$r_a$. Once $\mu\in k$, then
$X\br{A^{\mu}(d)}\subs A^{\la+\mu}(d)$ and the subset $\{m\la+\mu\cln m=0\sco n\}\subs k$ of order $n+1$ is thus not contained in $\Spec d$ that
implies $X^n\br{A^{\mu}(d)}=0$. Hence, $X^n=0$.
\end{proof}

Assume that $k=\ol{k}$ and $\cha k=0$.

Let $\hgt$ be a~semisimple Lie algebra. Fix the non-degenerate symmetric bilinear Killing form $(\cdot,\cdot)$ on~$\hgt$. For any $a\in\hgt$, the
operator $d:=\ad a$ is skew-symmetric and, consequently, $(\Ker d)^{\perp}=\Img d$, i.\,e. $\br{\zgt(a)}^{\perp}=[a,\hgt]$.

\begin{lemma}\label{nort} For each $e\in N_{\hgt}$, we have $e\perp\zgt(e)$.
\end{lemma}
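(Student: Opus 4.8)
The plan is to compute the Killing form directly. Recall that by definition $(e,x)=\tr\br{(\ad e)(\ad x)}$ for every $x\in\hgt$, so the assertion $e\perp\zgt(e)$ amounts to showing $\tr\br{(\ad e)(\ad x)}=0$ for all $x\in\zgt(e)$.

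First I would fix $x\in\zgt(e)$ and record two properties of the operators $A:=\ad e$ and $B:=\ad x$. On one hand, since $e\in N_{\hgt}$, the operator $A$ is nilpotent, say $A^m=0$. On the other hand, $[e,x]=0$ gives $[A,B]=\ad[e,x]=0$, so $A$ and $B$ commute.

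The key step is the elementary observation that the product of two commuting operators, one of which is nilpotent, is itself nilpotent: using $AB=BA$ we obtain $(AB)^m=A^mB^m=0$. A nilpotent operator has all its eigenvalues equal to~$0$, hence trace~$0$; therefore $\tr(AB)=\tr\br{(\ad e)(\ad x)}=0$. As $x\in\zgt(e)$ was arbitrary, this yields $e\perp\zgt(e)$.

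I do not expect a genuine obstacle here: the whole argument rests on the fact that a nilpotent operator commuting with another produces a nilpotent (hence traceless) product, together with the identification of the Killing form as $\tr\br{(\ad e)(\ad x)}$. An alternative route, staying closer to the identity $\br{\zgt(e)}^{\perp}=[e,\hgt]$ recorded just above, would be to prove instead that $e\in[e,\hgt]$; this follows, e.\,g., from the Jacobson--Morozov theorem, which embeds $e$ into an $\slg_2$-triple $(e,h,f)$ with $[h,e]=2e$, whence $e=\tfrac12[h,e]\in[\hgt,e]=[e,\hgt]$. But the direct trace computation is shorter and more self-contained, so I would present that one.
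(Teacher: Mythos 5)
Your proof is correct and is essentially the paper's own argument: the paper likewise reduces $e\perp\zgt(e)$ to the fact that the product of two commuting operators, one nilpotent, is nilpotent and hence traceless, merely citing Humphreys for that elementary lemma where you prove it inline via $(AB)^m=A^mB^m=0$.
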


\begin{proof} If $a\in\hgt$ and $[a,e]=0$, then $[\ad a,\ad e]=0$ implying (see Lemma in \cite[sec.\,II.8.2, p.\,36]{Hum})
$(a,e)=\tr\br{(\ad a)(\ad e)}=0$.
\end{proof}

\begin{theorem}\label{coni} Let $e\in\hgt$ be an arbitrary element. Then $(e\in N_{\hgt})\Lra\br{e\in[e,\hgt]}$.
\end{theorem}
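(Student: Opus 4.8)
The equivalence splits into two implications, and I would dispatch the forward one immediately. If $e\in N_{\hgt}$, then Lemma~\ref{nort} gives $e\perp\zgt(e)$, i.e. $e\in\br{\zgt(e)}^{\perp}$; but the identity displayed just before Lemma~\ref{nort} (applied with $a=e$) states precisely that $\br{\zgt(e)}^{\perp}=[e,\hgt]$, whence $e\in[e,\hgt]$. So the only real content is the converse.

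For the converse I would argue through the Jordan decomposition. Write $e=s+n$ with $s$ semisimple, $n$ nilpotent and $[s,n]=0$; since a semisimple Lie algebra has trivial center, $\ad$ is injective, so $e\in N_{\hgt}$ is equivalent to $s=0$, and the goal becomes $\br{e\in[e,\hgt]}\Ra\br{s=0}$. The operator $\ad e=\ad s+\ad n$ commutes with $\ad s$, hence preserves the eigenspace decomposition $\hgt=\bigoplus_{\la}\hgt_{\la}(\ad s)$, and on each summand it acts as $\la E+\ad n$ with $\ad n$ nilpotent. Consequently $\ad e$ is invertible on every $\hgt_{\la}(\ad s)$ with $\la\neq0$, so $[e,\hgt]=\Img(\ad e)$ meets $\hgt_0(\ad s)=\zgt(s)$ exactly in $\Img\br{\ad n|_{\zgt(s)}}=[n,\zgt(s)]$. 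As $e$ itself lies in $\zgt(s)$ (because $[s,e]=0$), the hypothesis $e\in\Img(\ad e)$ forces $e\in[n,\zgt(s)]\subs[\zgt(s),\zgt(s)]$.

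It remains to extract $s=0$ from $e\in[\zgt(s),\zgt(s)]$, and here I would invoke the standard structure of the centralizer of a semisimple element. The algebra $\zgt(s)$ is reductive, so $\zgt(s)=\mfr c\oplus[\zgt(s),\zgt(s)]$ where $\mfr c$ is its (toral) center; the element $s$ is central, so $s\in\mfr c$, whereas $n$, being nilpotent, lies in the derived subalgebra $[\zgt(s),\zgt(s)]$ (its component in the toral center $\mfr c$ is semisimple and commutes with the rest, hence by uniqueness of the Jordan decomposition equals the semisimple part of $n$, namely $0$). Then $e\in[\zgt(s),\zgt(s)]$ and $n\in[\zgt(s),\zgt(s)]$ give $s=e-n\in[\zgt(s),\zgt(s)]$, while $s\in\mfr c$ and $\mfr c\cap[\zgt(s),\zgt(s)]=0$ force $s=0$, i.e. $e=n\in N_{\hgt}$.

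The main obstacle is exactly this last step. It is tempting to finish purely with the Killing form: pairing $e$ against $s\in\zgt(e)$ and using that a commuting product of a semisimple and a nilpotent operator has trace $0$ yields $(s,s)=0$ at once. But over an algebraically closed field a nonzero semisimple element can be isotropic, so $(s,s)=0$ does not imply $s=0$. The genuinely necessary input is therefore the reductive decomposition of $\zgt(s)$ — equivalently, that nilpotent elements avoid the toral center — which is what converts the information ``$e$ lies in the derived subalgebra of $\zgt(s)$'' into the vanishing of the semisimple part of $e$.
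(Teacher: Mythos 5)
Your forward implication is exactly the paper's: Lemma~\ref{nort} plus the identity $\br{\zgt(e)}^{\perp}=[e,\hgt]$ applied with $a=e$. For the converse, however, you take a genuinely different and much heavier route than the paper. The paper's argument is a two-line application of Lemma~\ref{lrn}: if $e=[a,e]$ then $e$ is an eigenvector of the derivation $d:=\ad a$ with the nonzero eigenvalue $1$, so $e\in\hgt^{1}(d)$, and the lemma (left multiplication by an element of a root space of a derivation with nonzero eigenvalue is nilpotent, because $\ad e$ shifts the finitely many root spaces of~$d$ by~$1$) gives nilpotence of $\ad e$ at once --- no Jordan decomposition, no structure theory of centralizers, and the lemma even holds for arbitrary finite-dimensional algebras. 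Your argument instead runs through $e=s+n$, the decomposition $\hgt=\bigoplus_{\la}\hgt_{\la}(\ad s)$, the computation $\Img(\ad e)\cap\zgt(s)=[n,\zgt(s)]$, and then the reductivity of $\zgt(s)$ with toral center $\mfr{c}$ to force $s\in\mfr{c}\cap[\zgt(s),\zgt(s)]=0$. This is correct in outline, and your closing observation that the Killing-form shortcut fails (over $\ol{k}$ a nonzero semisimple element can satisfy $(s,s)=0$) is a good catch; but the route costs several nontrivial imported facts (reductivity of $\zgt(s)$, torality of its center, compatibility of the Jordan decomposition with the semisimple subalgebra $[\zgt(s),\zgt(s)]$) and buys nothing over the paper's argument. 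One step is also stated imprecisely: writing $n=c+m$ with $c\in\mfr{c}$ and $m\in[\zgt(s),\zgt(s)]$, the pair $(c,m)$ is not a priori the Jordan decomposition of~$n$, since $m$ need not be nilpotent; you must further split $m=m_{s}+m_{n}$ inside the semisimple algebra $[\zgt(s),\zgt(s)]$, check that $c+m_{s}$ is semisimple and commutes with $m_{n}$, and only then conclude from uniqueness that $c+m_{s}=0$, whence $c=-m_{s}\in\mfr{c}\cap[\zgt(s),\zgt(s)]=0$. That repair is routine, so I count your proof as correct by a longer road rather than as having a gap; still, compare it with the paper's use of Lemma~\ref{lrn} to see how much the eigenvalue-shifting trick saves.
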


\begin{proof} If $e\in N_{\hgt}$, then, according to Lemma~\ref{nort}, $e\in\br{\zgt(e)}^{\perp}=[e,\hgt]$.

Conversely, if $a\in\hgt$ and $e=[a,e]$, then $d:=\ad a\in\Der\hgt$ and $e\in\hgt_1(d)$ that implies (by Lemma~\ref{lrn}) nilpotence of the operator $\ad e$.
\end{proof}

Let $V$ be an arbitrary subspace. It is well-known that $\slg(V)$ is a~simple Lie algebra and the set of all nilpotent operators $V\to V$
coincides with its subset~$N_{\slg(V)}$. For each operator $X\cln V\to V$, we have $\bs{X,\glg(V)}=\bs{X,\slg(V)}\subs\slg(V)$
and, by Theorem~\ref{coni}, nilpotence of~$X$ is equivalent to the inclusion $X\in\bs{X,\glg(V)}$.

Take any Lie algebra~$\ggt$ and its representation~$\rh$ in a~space~$V$.

Let $\agt\subs\ggt$ be an arbitrary subspace. Define \textit{the weight subspaces}
\equ{
V_{\xi}(\agt):=\BC{v\in V\cln av=\br{\xi(a)}\cdot v\,\fa a\in\agt}\subs V\quad(\xi\in\agt^*)}
and, besides, \textit{the set of $\agt$\dh weights} $P_{\agt}:=\bc{\xi\in\agt^*\cln V_{\xi}(\agt)\ne0}\subs\agt^*$.

\begin{theorem}\label{lid}\
\begin{nums}{-1}
\item If the Lie algebra~$\ggt$ is solvable, then $P_{\ggt}\ne\es$.
\item For any ideal $\agt\nl\ggt$ and linear function $\xi\in P_{\agt}$, we have $[\ggt,\agt]\subs\Ker\xi\subs\agt$.
\end{nums}
\end{theorem}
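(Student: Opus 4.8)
The plan is to establish statement~2 first and then to deduce statement~1 from it by induction on~$\dim\ggt$, which is nothing but Lie's theorem (throughout I assume $V\ne0$). Observe first that in statement~2 the inclusion $\Ker\xi\subs\agt$ is automatic, since $\xi\in\agt^*$ is a~function on~$\agt$; the real content is $[\ggt,\agt]\subs\Ker\xi$, equivalently the vanishing $\xi\br{[x,a]}=0$ for all $x\in\ggt$ and $a\in\agt$.

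To prove statement~2, I would fix such $x$ and~$a$ together with a~nonzero vector $v\in V_{\xi}(\agt)$, put $v_i:=x^iv$, and let $n$ be the largest index with $v_0\sco v_{n-1}$ linearly independent; then $U:=\ha{v_0\sco v_{n-1}}$ is $x$\dh invariant (as $v_n\in U$) and carries these vectors as a~basis. By induction on~$i$ one shows that $bv_i-\br{\xi(b)}v_i\in\ha{v_0\sco v_{i-1}}$ for every $b\in\agt$: indeed $bv_i=x(bv_{i-1})+[b,x]v_{i-1}$, where $[b,x]\in\agt$ because $\agt\nl\ggt$, so the inductive hypothesis applied to both $b$ and $[b,x]$ gives the claim. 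Consequently each $b\in\agt$ acts on~$U$ by an upper\dh triangular matrix with all diagonal entries equal to $\xi(b)$, so that $\tr_U(b)=n\xi(b)$.

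Now both $x$ and~$a$ preserve~$U$ (the former by construction, the latter since $a\in\agt$ and $U$ is $\agt$\dh stable), whence the restriction of $[x,a]$ to~$U$ is the commutator of $x|_U$ and $a|_U$ and therefore $\tr_U\br{[x,a]}=0$. But $[x,a]\in\agt$, so also $\tr_U\br{[x,a]}=n\xi\br{[x,a]}$; comparing, $n\xi\br{[x,a]}=0$, and since $n\ge1$ and $\cha k=0$ we obtain $\xi\br{[x,a]}=0$. The main obstacle is exactly this trace computation, the core of Lie's theorem, and it is precisely here that the hypothesis $\cha k=0$ is indispensable.

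Finally I would derive statement~1 by induction on~$\dim\ggt$. The case $\dim\ggt\le1$ is clear, a~single operator over $k=\ol{k}$ having an eigenvector. For the step, solvability together with $\ggt\ne0$ forces $[\ggt,\ggt]\ne\ggt$, so one may choose a~codimension\dh one subspace $\hgt\sups[\ggt,\ggt]$; then $\hgt\nl\ggt$ is solvable and $\ggt=\hgt\oplus kz$ for some~$z$. By the inductive hypothesis pick $\xi\in P_{\hgt}$ and set $W:=V_{\xi}(\hgt)\ne0$. Statement~2 makes $W$ into a~$\ggt$\dh invariant subspace: for $w\in W$, $b\in\hgt$ one has $b(xw)=x(bw)+[b,x]w=\br{\xi(b)}xw+\br{\xi([b,x])}w=\br{\xi(b)}xw$, the last step because $[b,x]\in[\ggt,\hgt]\subs\Ker\xi$. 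Thus $z$ preserves the nonzero space~$W$ and has there an eigenvector $w_0$ with some eigenvalue~$\mu$; extending $\xi$ to $\wt{\xi}\in\ggt^*$ by $\wt{\xi}(z):=\mu$ yields $w_0\in V_{\wt{\xi}}(\ggt)$, so $P_{\ggt}\ne\es$.
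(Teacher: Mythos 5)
Your proof is correct: the trace computation on the cyclic subspace $U=\ha{v_0\sco v_{n-1}}$ is the standard proof of the invariance lemma (statement~2), and the induction through a codimension-one ideal containing $[\ggt,\ggt]$ is the standard proof of Lie's theorem (statement~1), with the hypotheses $k=\ol{k}$ and $\cha k=0$ used exactly where they are needed. The paper itself gives no argument here but only cites Serre (``Theorem~1$'$\," and ``The Main Lemma"), where essentially these same proofs appear, so you have supplied in full what the paper delegates to the literature; the only cosmetic issue is that your $\hgt$ collides with the paper's reserved notation $\hgt=\ggt/(\rad\ggt)$.
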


\begin{proof} Both statements are proved in \cite[sec.\,I.\textit{V}.5]{Serr} (formulated on p.\,36 as <<Theorem~1'>> and <<The Main Lemma>> respectively).
\end{proof}

\begin{imp}\label{inv} For any ideal $\agt\nl\ggt$ and function $\xi\in\agt^*$, we have $\ggt\br{V_{\xi}(\agt)}\subs V_{\xi}(\agt)$.
\end{imp}

\begin{proof} Assume that $\xi\in P_{\agt}$ (otherwise, it is nothing to prove). Let $b\in\ggt$ and $v\in V_{\xi}(\agt)$ be arbitrary elements. According to
Theorem~\ref{lid}, for each $a\in\agt$, the element $a_0:=[b,a]\in\agt$ belongs to $\Ker\xi$ implying $a_0v=\br{\xi(a_0)}\cdot v=0$,
$abv=bav=b\Br{\br{\xi(a)}\cdot v}=\br{\xi(a)}\cdot(bv)$. So, we obtained that $bv\in V_{\xi}(\agt)$.
\end{proof}

\begin{imp} If the representation~$\rh$ is irreducible, then $\rh(\rad\ggt)\subs kE_V$.
\end{imp}

\begin{proof} The ideal $\agt:=\rad\ggt\nl\ggt$ is solvable. Hence (see Theorem~\ref{lid}), for some function $\xi\in\agt^*$, the subspace
$V_{\xi}(\agt)\subs V$ is nonzero. By Corollary~\ref{inv}, it is $\ggt$\dh invariant and, thus, equals~$V$.
\end{proof}

\begin{imp}\label{caze} If the representation~$\rh$ is irreducible, then $\rh\br{(\rad\ggt)\cap[\ggt,\ggt]}=0$.
\end{imp}

\section{Proofs of the results}\label{prove}

In this section, Theorem~\ref{main} is proved.

Return to notations and agreements of \S\,\ref{introd}. Recall that the main field~$k$ is assumed to be algebraically closed and to have characteristic
zero.

\ul{$\text{\ref{nilp}}\Ra\text{\ref{cond}}$}.

Suppose that the element $a\in\ggt$ acts nilpotently in any representation of~$\ggt$.

Consider an arbitrary ideal $\agt\nl\ggt$, the factoring homomorphism $\pi_{\agt}\cln\ggt\thra\ggt/\agt$ and, besides, the element
$a_{\agt}:=\pi_{\agt}(a)\in\ggt/\agt$. Each representation $\rh\cln\ggt/\agt\to\glg(V)$ naturally induces the representation
$(\rh\circ\pi_{\agt})\cln\ggt\to\glg(V)$ and, therefore, the operator $\rh(a_{\agt})=(\rh\circ\pi_{\agt})(a)\in\glg(V)$ is nilpotent.
Research two special particular cases.

\begin{nums}{-1}\renewcommand{\labhi}[1]{\labheadii{\roman{#1}}}
\item $\agt:=[\ggt,\ggt]$, $V\ne0$. In this case, the Lie algebra $\ggt/\agt$ is commutative. For any function $\xi\in(\ggt/\agt)^*$, we have
a~Lie algebra homomorphism $\rh\cln\ggt/\agt\to\glg(V),\,x\to\br{\xi(x)}\cdot E$ and, hence, the operator
$\br{\xi(a_{\agt})}\cdot E=\rh(a_{\agt})\in\glg(V)$ is nilpotent implying $\xi(a_{\agt})=0$. So, the element
$\pi_{\agt}(a)=a_{\agt}\in\ggt/\agt$ is trivial\~ in other words, $a\in\agt=[\ggt,\ggt]$.
\item $\agt:=\rad\ggt$. Then $\ggt/\agt$ is the semisimple Lie algebra~$\hgt$, furthermore, $\pi_{\agt}=\pi$ and $a_{\agt}=\pi(a)$. So,
$\ad(a_{\agt})\in\glg(\hgt)$ is a~nilpotent operator, i.\,e. $\pi(a)=a_{\agt}\in N_{\hgt}$.
\end{nums}

\ul{$\text{\ref{cond}}\Ra\text{\ref{nilp}}$}.

Now, suppose that $a\in[\ggt,\ggt]$ and $\pi(a)\in N_{\hgt}$.

By Theorem~\ref{coni}, $\pi(a)\in\bs{\pi(a),\hgt}=\bs{\pi(a),\pi(\ggt)}=\pi\br{[a,\ggt]}$,
\equ{
a\in\br{[a,\ggt]+(\rad\ggt)}\cap[\ggt,\ggt]=[a,\ggt]+\br{(\rad\ggt)\cap[\ggt,\ggt]}.}
If $\rh\cln\ggt\to\glg(V)$ is an arbitrary irreducible representation, then, according to Corollary~\ref{caze},
$\rh(a)\in\rh\br{[a,\ggt]}=\bs{\rh(a),\rh(\ggt)}\subs\bs{\rh(a),\glg(V)}$ and, hence, the operator $\rh(a)\cln V\to V$ is nilpotent. Thus, in each
irreducible representation of the Lie algebra~$\ggt$, its element~$a$ acts nilpotently. This statement still holds without requiring irreducibility\:
for any representation $\ggt\to\glg(V)$, there exists an increasing chain of subrepresentations $0=V_0\ssub V_n=V$ such that all quotient representations
$V_m/V_{m-1}$ ($m=1\sco n$) are irreducible.

So, Theorem~\ref{main} is completely proved.

\section*{Acknowledgements}

The author is grateful to Prof. \fbox{E.\,B.\?Vinberg} for exciting an interest to fundamental algebraic science.

The author dedicates the article to E.\,N.\?Troshina.

\end{document}